\documentclass{article}
\usepackage[utf8]{inputenc}

\usepackage{amsmath, amssymb, amsthm, amsfonts}
\usepackage{csquotes} 
\usepackage[american]{babel}
\usepackage{hyperref} 
\usepackage{color}
\usepackage{enumitem} 

\newtheorem{theorem}{Theorem}
\newtheorem{lemma}[theorem]{Lemma}

\numberwithin{theorem}{section}
\numberwithin{equation}{section}

\definecolor{darkgreen}{rgb}{0.0, 0.55, 0.55}

\newcommand{\Z}{\mathbb{Z}}

\newcommand{\R}{\mathbb{R}}

\newcommand{\T}{\mathbb{T}}

\newcommand{\real}{\operatorname{Re}}

\begin{document}

\title{Global wellposedness of NLS in $H^1(\R) + H^s(\T)$}
\author{Friedrich Klaus\footnote{Karlsruhe Institute of Technology, friedrich.klaus@kit.edu}~ and Peer Kunstmann\footnote{Karlsruhe Institute of Technology, peer.kunstmann@kit.edu}}
\date{September 2021}

\maketitle

\begin{abstract}
    We show global wellposedness for the defocusing cubic nonlinear Schrö\-dinger equation (NLS) in $H^1(\R) + H^{3/2+}(\T)$, and for the defocusing NLS with polynomial nonlinearities in $H^1(\R) + H^{5/2+}(\T)$. This complements local results for the cubic NLS \cite{teeth} and global results for the quadratic NLS \cite{qNLS} in this hybrid setting.
\end{abstract}

\section{Introduction}

We consider the wellposedness question for the nonlinear Schrödinger equation 
\begin{equation}
    \begin{split}\label{eq:NLS}
        iu_t + u_{xx} &= |u|^{p-1}u,\\
        u(0) &= u_0 \in H^{s_1}(\R) + H^{s_2}(\T)
    \end{split}
\end{equation}
with non-decaying initial data. This problem has been an area of active research for many years now. One of its motivations is the propagation of signals in glass-fiber cables, where the cubic NLS is used as an approximate model equation \cite{KIVSHAR}. In this model, the roles of space and time are reversed and the initial value $u_0$ describes the signal seen at a fixed point of the cable. Hence, periodic initial data can be understood as encoding, e.g., an infinite string of ones. Such a signal carries no information, and we want to consider signals where some of the ones have been overwritten by a zero. Following \cite{teeth} this is done by adding a nonperiodic part $v_0 \in H^{s_1}(\R)$ to the initial data $w_0 \in H^{s_2}(\T)$. Global existence then translates to having no bound on the length of the cable. 

From a mathematical point of view, there is a huge number of directions by which NLS with non-decaying initial data has been approached, and we will only name some of them. The most classical one of them is purely periodic initial data, both in the general case \cite{bourgain} and even earlier in the integrable case $p=3$ under assumptions on the spectral properties of the corresponding Lax operator \cite{ablowitz}. A natural generalisation is to consider quasi-periodic, almost periodic and limit periodic initial data \cite{faddeev, nlscantor, ohlimitperiodic}. NLS with prescribed boundary value $\lim_{x\to\pm \infty} |u(x)| = 1$ is known as the Gross-Pitaevskii equation and describes Bose gases at zero temperature \cite{KIVSHAR}. For these mentioned types of initial data, global results exist. Additionally, there are local results in the case of initial data lying in the modulation spaces $M^s_{\infty,2}(\R)$ \cite{benyi,modulationintersection} and the case of analytic initial data \cite{dodsonbounded}.

Our approach is to consider initial data which are the sum of a periodic and a decaying signal, i.e. $u_0 \in H^{s_1}(\R) + H^{s_2}(\T)$. We note that this type of data is more general than the periodic case, and also includes the emergence of dark solitons as in the Gross-Pitaevskii case, but is less general than for example $u_0 \in M^s_{\infty,2}(\R)$. Local wellposedness results for this problem have been covered in \cite{teeth, qNLS}, and our main interest is to extend them to global solutions. These local solutions are constructed as follows: By writing $u = v + w \in H^{s_1}(\R) + H^{s_2}(\T)$ and using the fact that $u$ satisfies \eqref{eq:NLS}, $w$ is seen to also satisfy \eqref{eq:NLS} but on the torus,
\begin{equation}
    \begin{split}\label{eq:NLSw}
        iw_t + w_{xx} &= |w|^{p-1}w,\\
        w(0) &= w_0 \in H^{s_2}(\T),
    \end{split}
\end{equation}
at least if $u,v,w$ have suitable regularity in space and time. Hence $v$ has to be the solution of the perturbed problem
\begin{equation}
    \begin{split}\label{eq:NLSv}
        iv_t + v_{xx} &= |v+w|^{p-1}(v+w) - |w|^{p-1} w,\\
        v(0) &= v_0 \in H^{s_1}(\R),
    \end{split}
\end{equation}
where $w \in C([0,\infty),H^{s_2}(\T))$ is the solution of \eqref{eq:NLSw}. Equation \eqref{eq:NLSv} is a perturbation of NLS on the real line, and classical methods like Strichartz estimates can be used to establish local wellposedness \cite{qNLS}.

In order to extend local to global solutions, we only need to consider \eqref{eq:NLSv}, because \eqref{eq:NLSw} is known to exhibit global solutions. The main problem here is that the conservation laws that exist both in the periodic and non-periodic case give rise to a conservation law for \eqref{eq:NLSv} with an $L^1$ part in it, for example we have formal conservation of
\[
    \int_\R |u|^2 - |w|^2 \, dx = \int_\R |v|^2 + 2\real(v\bar w)\, dx,
\]
but not of $\int |v|^2\, dx$. As a consequence, these exact conservation laws are not applicable in the $L^2$-based setting. Instead, we want to make use of quantities for which we can control the growth rate. In this setting, the power of the nonlinearity plays a crucial role. Indeed, for the quadratic nonlinearity $p=2$ \cite{qNLS} showed global wellposedness in low regularity by a Gronwall argument for $\int |v|^2\, dx$. Such a straight forward calculation does not work anymore if $p > 2$.

To overcome this problem, we will work with higher regularity and assume $v_0 \in H^1(\R)$. Note that the time-dependent Hamiltonian
\begin{equation}\label{eq:differenceHamiltonian}
    H(q,r) = \int_{\R} q_x r_x + (q+w)^2(r+\bar w)^2- |w|^4 - 2(q\bar w + r w) |w|^2 \, dx,
\end{equation}
gives rise to \eqref{eq:NLSv} as the Hamiltonian equation
\begin{equation*}
    iq_t = \frac{\delta}{\delta r} H(q,r) \Big|_{(q,r) = (v,\bar v)}.
\end{equation*}
Our main idea is to make use of the formula
\begin{equation}\label{eq:flow}
    \frac{d}{dt} F(t,v(t)) = \{F,H\}(t,v(t)) + (\partial_t F)(t,v(t)),
\end{equation}
which holds for Hamiltonian equations with Poisson bracket $\{\cdot,\cdot\}$, and choosing $F$ to be the Hamiltonian of the equation \eqref{eq:NLSv} itself. From \eqref{eq:flow} it follows that the time derivative of $H$ with respect to the flow induced by itself is non-zero, but only contains time derivatives that fall on $w$, because this is the only explicitly time-dependent part of \eqref{eq:differenceHamiltonian}. We want to mention that these calculations were inspired by calculations performed in \cite{dodson} and the described Hamiltonian formalism makes also transparent why they work there.

While the Hamiltonian structure lies implicit in all arguments used below, we choose not to use the Hamiltonian language in what follows. The reason for this is that the main difficulty in the calculation is not to calculate the formal time derivative, but rather to make sure that taking time derivatives is allowed. Still it may help the reader in understanding why the calculations work as they stand, as it helped the authors in doing so.

The paper is organized as follows: in Section \ref{sec1} we give a proof that global solutions on the torus exist, in Section \ref{sec2} we prove that local solutions on the line exist, and in Section \ref{sec3} we show that the local solutions on the line are global. For the presentation we give simple and self-contained proofs.

\subsubsection*{Acknowledgements}
The authors want to thank Leonid Chaichenets, Dirk Hundertmark and Robert Schippa for useful discussions. Funded by the Deutsche Forschungsgemeinschaft (DFG, German Research Foundation) – Project-ID 258734477 – SFB 1173.

\section{Global periodic solutions}\label{sec1}

The wellposedness theory of NLS with periodic data is a classical problem. When $2 \leq p < 5$, Lebowitz, Rose and Speer \cite{lebowitzrosespeer} (refering to \cite{ginibrevelo} for a proof) and later Bourgain \cite{bourgain} in lower regularity showed existence of local solutions. These solutions are global due to conservation of the $L^2(\T)$-norm and the dependence of the guaranteed time of existence on $\|w_0\|_{L^2(\T)}$.

\begin{theorem}[\cite{bourgain}]
    If $2 \leq p < 5$, the Cauchy problem \eqref{eq:NLSw} is locally wellposed in $L^4([-T,T]\times \T)$ for any $w_0 \in L^2(\T)$. 
    
    The guaranteed time of existence $T$ depends only on $\|w_0\|_{L^2(\R)}$.
\end{theorem}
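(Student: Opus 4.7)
The plan is to implement Bourgain's classical $X^{s,b}$ fixed-point argument. Rewriting \eqref{eq:NLSw} in Duhamel form as $w = \Phi(w)$ with
\[
    \Phi(w)(t) = e^{it\partial_{xx}} w_0 - i \int_0^t e^{i(t-s)\partial_{xx}} |w|^{p-1} w(s) \, ds,
\]
I would look for a fixed point in the time-restricted Bourgain space $X^{0,b}_T$ for some $b$ slightly above $1/2$. The standard linear bounds yield $\|\Phi(0)\|_{X^{0,b}_T} \lesssim \|w_0\|_{L^2}$, and the Duhamel operator enjoys a time gain $\|\int_0^t e^{i(t-s)\partial_{xx}} F\,ds\|_{X^{0,b}_T} \lesssim T^{\theta}\|F\|_{X^{0,b-1+\theta}_T}$ for a small $\theta > 0$, by the usual smooth time-cutoff argument.

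The analytic heart of the proof is Bourgain's periodic Strichartz estimate $\|u\|_{L^4_{t,x}(\R\times\T)} \lesssim \|u\|_{X^{0,3/8+}}$. Interpolating this with the embedding $X^{0,b} \hookrightarrow L^\infty_t L^2_x$ (valid for $b > 1/2$) produces a family of mixed $L^q_t L^r_x$ bounds. From these, one derives the multilinear nonlinear estimate
\[
    \bigl\| |w|^{p-1} w \bigr\|_{X^{0,b-1+\theta}_T} \lesssim T^{\theta'} \|w\|_{X^{0,b}_T}^p \qquad (2 \leq p < 5)
\]
by dualizing to a bound on $\iint |w|^p |z|\,dx\,dt$ and distributing the factors via Hölder. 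Differences of nonlinearities are handled via the pointwise inequality $\bigl||a|^{p-1}a - |b|^{p-1}b\bigr| \lesssim (|a|^{p-1}+|b|^{p-1})|a-b|$, which covers the non-integer exponents.

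Combining the pieces, $\Phi$ maps a ball of radius $\sim \|w_0\|_{L^2}$ in $X^{0,b}_T$ into itself and is a contraction on it, provided $T^{\theta'} \|w_0\|_{L^2}^{p-1}$ is sufficiently small. This pins down $T$ as a function of $\|w_0\|_{L^2}$ alone, and the embedding $X^{0,b}_T \hookrightarrow L^4([-T,T]\times\T)$ delivers the claimed $L^4$-wellposedness. The main obstacle is the multilinear estimate uniformly in the full subcritical range $p<5$: for $p=3$ it follows immediately from two applications of the $L^4$ Strichartz together with Hölder, but for $p$ close to $5$ one must either invoke refined bilinear estimates of Bourgain or an $L^6$-type Strichartz bound and interpolate carefully to distribute the $p$ copies of $w$ between $L^4_{t,x}$ and $L^\infty_t L^2_x$ with an admissible power of $T$ to spare.
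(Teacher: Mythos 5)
This theorem is quoted from Bourgain \cite{bourgain} and the paper gives no proof of it; it is stated only for context, and the paper deliberately sidesteps the whole $X^{s,b}$ machinery by working in $H^s(\T)$ with $s\geq 1$, where the algebra/Sobolev structure makes the contraction argument elementary (Theorem \ref{thm:lwpw}). So there is no in-paper proof to compare against; your sketch is the standard route that the cited references (Bourgain, Erdo\u{g}an--Tzirakis) follow, and its architecture --- Duhamel in $X^{0,b}_T$, $b=1/2+$, the periodic $L^4$ Strichartz estimate, the $T^{\theta}$ gain from working with $b'<1/2$ on the inhomogeneity, and a physical-space duality argument for the non-integer-power nonlinearity --- is correct.

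Two points where the sketch should be tightened. First, the interpolation family you name for $p$ near $5$, namely ``between $L^4_{t,x}$ and $L^\infty_t L^2_x$'', cannot work: interpolating those two norms only yields $L^q_tL^r_x$ with $r\leq 4$, whereas estimating $\iint |w|^p|z|\,dx\,dt$ with $p$ close to $5$ forces spatial integrability close to $L^6_x$ on the copies of $w$. The estimate that actually closes the argument is the periodic Strichartz bound $\|u\|_{L^q_{t,x}([0,1]\times\T)}\lesssim \|u\|_{X^{0,1/2+}}$ for every $q<6$ (obtained from the lossless $L^4$ bound and the $\varepsilon$-loss $L^6$ bound, with the loss absorbed in the strictly subcritical range); taking $q=p+1<6$ for all $p+1$ factors in the dual pairing gives $\| |w|^{p-1}w\|_{X^{0,-b'}}\lesssim \|w\|_{X^{0,1/2+}}^{p}$, and the condition $q=p+1<6$ is exactly where the hypothesis $p<5$ enters. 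Second, for the contraction you should record that the pointwise difference bound is applied inside the same duality argument, so that the difference estimate lands in the same $L^{p+1}_{t,x}$ framework; and the uniqueness assertion one actually obtains this way is in $C([-T,T],L^2)\cap X^{0,b}_T$ (hence in the $L^4_{t,x}$ class after the embedding), which is the precise sense in which the quoted statement is meant.
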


Similar wellposedness results in $L^2$ for $2 \leq p < 5$ can be obtained with the help of $X^{s,b}$ spaces, see for example \cite[Section 3]{erdogan}. Using these techniques, the second author with Chaichenets, Hundertmark and Pattakos investigated the local wellposedness theory for $1 \leq p \leq 2$ \cite{qNLS}. Wellposedness in the mass-critical case $p = 5$ is an open problem \cite[p. 93]{erdogan}. In general, there is wellposedness in $H^s(\T), s\geq 0$ if $2 \leq p < 1+ \frac{4}{1-2s}$ \cite{bourgain}.

We only need well-posedness results in spaces $H^s(\T)$, $s \geq 1$, which are far away from being critical for any $p$. To prove global existence in $H^s(\T), s > 1$, we first prove a local result in $H^1(\T)$ which due to conservation of energy becomes immediately global, and then argue by persistence of regularity.

\begin{theorem}[Local wellposedness for $w$]\label{thm:lwpw}
    Given $p \geq 2$, the Cauchy problem \eqref{eq:NLSw} is locally wellposed in $C^0([-T,T],H^1(\T))$ for any $w_0 \in H^1(\T)$. 
    
    The guaranteed time of existence $T^*$ satisfies $T^* \gtrsim \|w_0\|_{H^1(\T)}^{1-p}$.
\end{theorem}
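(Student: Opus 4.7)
The plan is to run a standard Banach fixed-point argument in $C([-T,T], H^1(\T))$ on the Duhamel formulation
\[
    \Phi[w](t) = e^{it\partial_x^2} w_0 - i \int_0^t e^{i(t-s)\partial_x^2}\bigl(|w|^{p-1}w\bigr)(s)\, ds,
\]
exploiting that $e^{it\partial_x^2}$ is an isometry on $H^1(\T)$ and that $H^1(\T)$ is a Banach algebra via Sobolev embedding $H^1(\T) \hookrightarrow L^\infty(\T)$.

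First I would fix $R = 2\|w_0\|_{H^1(\T)}$ and consider the closed ball $B_R = \{w \in C([-T,T], H^1(\T)) : \|w\|_{L^\infty_T H^1_x} \leq R\}$ equipped with the natural metric. The key nonlinear estimate is the pair
\begin{align*}
    \bigl\| |w|^{p-1} w \bigr\|_{H^1(\T)} &\lesssim \|w\|_{H^1(\T)}^p, \\
    \bigl\| |w|^{p-1} w - |w'|^{p-1} w' \bigr\|_{H^1(\T)} &\lesssim \bigl(\|w\|_{H^1(\T)}^{p-1} + \|w'\|_{H^1(\T)}^{p-1}\bigr)\|w - w'\|_{H^1(\T)},
\end{align*}
which I would establish using the algebra property together with the pointwise bounds $\bigl| |w|^{p-1}w \bigr| \lesssim |w|^p$ and $\bigl| \partial_w(|w|^{p-1}w) \bigr| + \bigl|\partial_{\bar w}(|w|^{p-1}w)\bigr| \lesssim |w|^{p-1}$ (valid since $p\geq 2$ makes the nonlinearity $C^1$ as a map $\C \to \C$), then integrating along the segment from $w'$ to $w$ to get the difference bound.

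Combining the nonlinear estimate with the isometry property and taking the $L^\infty_T H^1_x$ norm of $\Phi[w]$ yields
\[
    \|\Phi[w]\|_{L^\infty_T H^1_x} \leq \|w_0\|_{H^1(\T)} + C\, T\, \|w\|_{L^\infty_T H^1_x}^p, \qquad \|\Phi[w] - \Phi[w']\|_{L^\infty_T H^1_x} \leq C T R^{p-1} \|w-w'\|_{L^\infty_T H^1_x}.
\]
Choosing $T$ so that $C T R^{p-1} \leq 1/2$, i.e.\ $T \sim R^{1-p} \sim \|w_0\|_{H^1(\T)}^{1-p}$, makes $\Phi$ a contraction on $B_R$, and its unique fixed point is the desired local solution; uniqueness and continuous dependence in $H^1(\T)$ follow by the same Lipschitz estimate.

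The only mildly delicate step is the nonlinear estimate in $H^1$ for non-integer $p \geq 2$, since then $|w|^{p-1}w$ is not a polynomial in $(w,\bar w)$; this is the place where the assumption $p \geq 2$ enters, because it is precisely what guarantees $C^1$-regularity of the nonlinearity and hence the Lipschitz bound on the ball. Everything else is soft: the Schrödinger group preserves $H^1(\T)$, the Sobolev embedding on $\T$ handles the algebra estimate, and the time of existence $T^* \gtrsim \|w_0\|_{H^1(\T)}^{1-p}$ drops out of the contraction condition.
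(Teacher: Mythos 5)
Your proposal is correct and follows essentially the same route as the paper: Duhamel formulation, contraction mapping on the ball of radius $R = 2\|w_0\|_{H^1(\T)}$ in $C([-T,T],H^1(\T))$, the nonlinear estimate $\||w|^{p-1}w\|_{H^1(\T)} \lesssim \|w\|_{H^1(\T)}^p$ via Sobolev embedding $H^1(\T)\hookrightarrow L^\infty(\T)$, and the Lipschitz difference bound obtained by integrating the $C^1$ nonlinearity along the segment (the paper's ``adding a zero'' computation), yielding $T \sim \|w_0\|_{H^1(\T)}^{1-p}$. No substantive differences.
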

\begin{proof}
    Let $S(t) = e^{it\partial_x^2}$. The integral formulation of \eqref{eq:NLSw} is
    \begin{equation}\label{eq:integralformulationw}
        w(t) = S(t)w_0 + i\int_0^t S(t-s)(|w|^{p-1}w)(s) ds.
    \end{equation}
    We will show that the right-hand side is a contraction on
    \[X_{R,T} = \{v \in C([0,T],H^1(\T)): \|v\|_{C([0,T],H^1(\T))} \leq R\}\]
    where $R,T$ will be chosen later. Note that
    \[
        \partial_x \big(|f|^{p-1}f\big) = (p-1) |f|^{p-3}\real(\bar f f_x)f + |f|^{p-1}f_x
    \]
    Thus when $p \geq 1$, we find by Sobolev's embedding $H^1(\T) \subset L^\infty(\T)$ that given $f \in H^1(\T)$, $|f|^{p-1}f \in H^1(\T)$ with $\||f|^{p-1}f\|_{H^1(\T)} \lesssim \|f\|^p_{H^1(\T)}$. Together with the fact that $S(t)$ is an isometry on $H^s(\T)$ we can bound the $C([0,T],H^1(\R))$-norm of the right-hand side of \eqref{eq:integralformulationw} by
    \[  \|w_0\|_{H^1(\T)} + cTR^p \]
    for some constant $c >0$ if $w \in X_{R,T}$. Choosing $R = 2\|w_0\|_{H^1(\T)}$ deals with the first summand. For the second summand, we choose $T = (2c)^{-1}R^{1-p}$. This guarantees that the right-hand side defines a mapping on $X_{R,T}$. The contractive property is proven similarly: We have to estimate
    \begin{equation}\label{eq:diffintequation}
        \int_0^t S(t-s)\big(|w_1|^{p-1}w_1 - |w_2|^{p-1}w_2\big) \, ds
    \end{equation}
    uniformly in $H^1(\T)$. To this end, we bound
    \begin{equation*}
        ||w_1|^{p-1}w_1 - |w_2|^{p-1}w_2| \leq c|w_1 - w_2|\big(|w_1|^{p-1}+|w_2|^{p-2}\big)
    \end{equation*}
    and by adding a zero (see also Lemma \ref{lwpdifferenceestimate} for a more general estimate)
    \begin{equation*}
    \begin{split}
        |\partial_x(|w_1|^{p-1}w_1 - |w_2|^{p-1}w_2)| &\leq c\big(|w_1|^{p-1}|w_{1,x} - w_{2,x}| \\
        &\quad+ (|w_1|^{p-2} + |w_2|^{p-2})|w_1 - w_2||w_{2,x}|\big).
    \end{split}
    \end{equation*}
    This shows that we can estimate \eqref{eq:diffintequation} in $C([0,T],H^1(\R))$ by
    \[
        cTR^{p-1}\|w_1 - w_2\|_{H^1(\T)}.
    \]
    Hence the contractive property can be achieved by possibly making $T \sim R^{1-p}$ a bit smaller.
\end{proof}
Together with the conservation of the energy
\begin{equation}
    E(w) = \int_{\T}\frac{1}{2}|w_x|^2 + \frac{1}{p+1}|w|^{p+1} \, dx
\end{equation}
we obtain global wellposedness in $H^1(\T)$. Indeed,
\[
\begin{split}
    \|w_x(t)\|_{L^2}^2 - \|w_x(0)\|_{L^2}^2 &= \frac{2}{p+1}\big(\|w(0)\|_{L^{p+1}}^{p+1}-\|w(t)\|_{L^{p+1}}^{p+1}\big)\\
    &\lesssim \|w(0)\|_{L^2}^{\frac{p+3}{2}}\|w_x(0)\|_{L^2}^{\frac{p-1}{2}} \leq \|w_0\|_{H^1}^{p+1}.
\end{split}
\]
Here, we are able to argue without any smallness condition on the $L^2$ norm, because we are considering the defocusing equation. This shows
\begin{equation}\label{eq:energybound}
    \|w(t)\|_{H^1} \lesssim \|w_0\|_{H^1} + \|w_0\|_{H^1}^\frac{p+1}{2}.
\end{equation}

We turn to higher regularity and will prove that the constructed solutions are in $C([-T,T],H^s(\T))$ if the initial data is in $H^s(\T)$. As a byproduct, we obtain an exponential bound for these norms. 

We need a small technical result which is a standard result when $s$ is an integer and which was already used in the previous proof for $s=1$. We take the proof of \cite[Lemma A.9]{tao} and follow the explicit control on the constant in the estimate.

We introduce the notation $[p] = \sup\{k \in \mathbb{Z}, k \leq p\}$.

\begin{lemma}\label{lemma:fractionalestimate}
    Let $0 \leq s \leq [p]$. If $f \in H^s(\T)\cap L^\infty(\T)$, then also $|f|^{p-1}f \in H^s(\T)$ with bound
    \begin{equation}
        \||f|^{p-1}f\|_{H^s(\T)} \lesssim \|f\|_{L^\infty}^{p-1}\|f\|_{H^s(\T)}.
    \end{equation}
    If $p$ is an odd integer, the same holds for all $0 \leq s < \infty$.
\end{lemma}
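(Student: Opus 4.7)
The plan is to adapt the Moser-type argument of \cite[Lemma A.9]{tao} while tracking the constants so that the desired factorization $\|f\|_{L^\infty}^{p-1}\|f\|_{H^s}$ emerges. The starting point is that $F(z)=|z|^{p-1}z$ is $C^{[p]}$ on $\C$ with Wirtinger derivatives bounded by $|F^{(j)}(z)|\lesssim |z|^{p-j}$ for $0\le j\le[p]$; this is precisely the smoothness needed to differentiate $F(f)$ up to order $[p]$. When $p$ is an odd integer, $F(z)=z^{(p+1)/2}\bar z^{(p-1)/2}$ is a polynomial in $f$ and $\bar f$, and the claim reduces to iterating the fractional Leibniz (Kato--Ponce) rule
\[\|gh\|_{H^s(\T)}\lesssim \|g\|_{L^\infty(\T)}\|h\|_{H^s(\T)}+\|g\|_{H^s(\T)}\|h\|_{L^\infty(\T)},\]
distributing $p-1$ factors into $L^\infty$ and a single factor into $H^s$.

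For integer $s$ with $1\le s\le [p]$ I would apply the Fa\`a di Bruno formula to $\partial_x^s F(f)$, obtaining a sum of terms $F^{(j)}(f)\prod_{i=1}^{j}\partial_x^{\alpha_i}f$ with $\alpha_i\ge 1$ and $\sum_i\alpha_i=s$. The factor $F^{(j)}(f)$ is bounded pointwise by $\|f\|_{L^\infty}^{p-j}$, and each derivative factor $\partial_x^{\alpha_i}f$ is placed in $L^{2s/\alpha_i}(\T)$, where the Gagliardo--Nirenberg inequality gives $\|\partial_x^{\alpha}f\|_{L^{2s/\alpha}(\T)}\lesssim\|f\|_{L^\infty(\T)}^{1-\alpha/s}\|f\|_{H^s(\T)}^{\alpha/s}$ for $1\le\alpha\le s$. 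H\"older's inequality then combines these into one copy of $\|f\|_{H^s}$ (since $\sum_i\alpha_i=s$) and a total of $(p-j)+(j-1)=p-1$ copies of $\|f\|_{L^\infty}$, exactly as claimed.

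For non-integer $s\in(0,[p])$ set $k=[s]$, so that $k+1\le[p]$ and $F\in C^{k+1}$. Here I would pass to a Littlewood--Paley paraproduct decomposition on $\T$ together with the telescoping identity $F(f)=\sum_N(F(P_{\le N}f)-F(P_{\le N/2}f))$. The mean value theorem applied to $F$, together with $|\nabla F|\lesssim |z|^{p-1}$, bounds each summand pointwise by $C\|f\|_{L^\infty}^{p-1}|P_{N}f|$, which contributes $\|f\|_{L^\infty}^{p-1}\|P_Nf\|_{L^2}$ after integration. Bernstein's inequality on $\T$ then converts the $N^{2s}$-weighted $\ell^2$ sum into $\|f\|_{H^s}^2$; if higher-order information is needed to handle the $N^{2s}$ weight with $s>1$, I would iterate the telescope on the successive derivatives of $F$, which is legitimate because $F\in C^{k+1}$.

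The main obstacle is the non-integer step, for which the paraproduct bookkeeping and the iteration of the telescope have to be executed carefully; the integer and polynomial cases are essentially routine once Gagliardo--Nirenberg and Kato--Ponce are at hand. The proof also makes transparent why the condition $s\le[p]$ is sharp: it is exactly what guarantees $F\in C^{k+1}$ so that the mean-value telescope applies, and this is also why the restriction disappears when $p$ is an odd integer and $F$ is a polynomial.
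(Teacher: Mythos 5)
Your overall strategy is the same as the paper's: adapt Tao's Lemma A.9 via Littlewood--Paley, using the pointwise bounds $|F^{(j)}(z)|\lesssim|z|^{p-j}$ for $F(z)=|z|^{p-1}z$, with the odd-integer case reduced to the algebra/Kato--Ponce estimate and the endpoint integer case handled by direct differentiation. Those two parts of your proposal are fine and match the paper.

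The gap is in the fractional case, which is the actual content of the lemma. The telescoped pieces $F(P_{\le N}f)-F(P_{\le N/2}f)$ are \emph{not} frequency-localized near $N$, so the mean-value bound $\|F(P_{\le N}f)-F(P_{\le N/2}f)\|_{L^2}\lesssim \|f\|_{L^\infty}^{p-1}\|P_Nf\|_{L^2}$ followed by ``Bernstein converts the $N^{2s}$-weighted $\ell^2$ sum into $\|f\|_{H^s}^2$'' is not a valid step: to assemble $\|F(f)\|_{H^s}^2\sim\sum_M M^{2s}\|P_MF(f)\|_{L^2}^2$ you must control $\|P_M(F(P_{\le N}f)-F(P_{\le N/2}f))\|_{L^2}$ for output frequencies $M\gg N$, and the crude $L^2$ bound gives no decay in $M/N$. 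This off-diagonal control is needed for every $s>0$, not only $s>1$ as your hedge suggests, and it is precisely where the hypothesis $s\le[p]$ and the factorization $\|f\|_{L^\infty}^{p-1}\|f\|_{H^s}$ have to be earned. The paper's fix is to split, for each output frequency $N$, $F(f)=\bigl(F(f)-F(P_{<N}f)\bigr)+F(P_{<N}f)$: the first term is handled by the fundamental theorem of calculus exactly as in your telescope, while for the second one writes $\|P_NF(P_{<N}f)\|_{L^2}^2\lesssim N^{-2k}\|\partial^k F(P_{<N}f)\|_{L^2}^2$ with $k=[p]$ and then runs a multilinear Littlewood--Paley product estimate on $\partial^k F(P_{<N}f)$ (one factor in $L^2$, the remaining $k-1$ derivative factors and the $\|f\|_{L^\infty}^{p-k}$ prefactor in $L^\infty$), summing the frequencies in increasing order. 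Your phrase ``iterate the telescope on the successive derivatives of $F$'' gestures at this, but the bookkeeping you defer is the bulk of the proof, and as written the central estimate does not close.
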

\begin{proof}
    The case of $p$ being an odd integer follows from the classical estimate
    \[
        \|uv\|_{H^s(\T)} \lesssim \|u\|_{L^\infty(\T)}\|v\|_{H^s(\T)} + \|u\|_{H^s(\T)}\|v\|_{L^\infty(\T)}.
    \]
    Moreover, we may assume $0< s < [p]$ because in the case $s = [p]$ the norm can be easily estimated by calculating the weak derivatives directly.
    We denote by $P_N$ the usual Littlewood-Paley projection onto frequencies around the dyadic number $N$, which in the torus case is a projection onto a finite number of frequencies. Similarly, we define $P_{\geq N}$ and $P_{<N}$.
    
    First of all we note that
    \[
        \|P_{\leq 1} |f|^{p-1}f\|_{L^2} \leq \|f\|_{L^\infty}^{p-1}\|f\|_{L^2},
    \]
    meaning that we can restrict to frequencies $N > 1$ in the sum
    \[
        \||f|^{p-1}f\|_{H^s(\T)}^2 \sim_s \|P_{\leq 1}|f|^{p-1}f\|_{L^2(\T)}^2 + \sum_{N>1} N^{2s} \|P_N|f|^{p-1}f\|_{L^2}^2.
    \]
    We write $f = P_{<N}f + P_{\geq N}f$ and observe that
    \[
        \||f|^{p-1}f - |P_{<N}f|^{p-1}P_{<N}f\|_{L^2} \leq p\|P_{\geq N} f\|_{L^2}\|f\|_{L^\infty}^{p-1}
    \]
    since $(|x|^{p-1}x)' = p|x|^{p-1}$ and by the fundamental theorem of calculus. The high-frequency contribution can now be estimated since
    \begin{align*}
        \sum_{N>1} N^{2s} \|P_{\geq N} f\|_{L^2} &\leq \sum_{N' \geq N > 1} N^s (N')^s \|P_{N'}f\|_{L^2}^2 \lesssim \sum_{N' > 1} (N')^{2s}\|P_{N'}f\|_{L^2}^2.
    \end{align*}
    We turn to the low-frequency contribution and write for $k = [p]$,
    \[
        \|P_N(|P_{<N}f|^{p-1}P_{<N}f)\|^2_{L^2} \lesssim N^{-2k} \|\partial^k(|P_{<N}f|^{p-1}P_{<N}f)\|_{L^2}^2.
    \]
    From $|\partial^l(|x|^{p-1}x)| \lesssim_l |x|^{p-l}$ for $l \leq k$ and repeatedly using the chain rule, we see that
    \[
        |\partial^k(|P_{<N}f|^{p-1}P_{<N}f)| \lesssim_p \|f\|_{L^\infty}^{p-k}\sum_{r_1 + \dots + r_k = k} |\partial^{r_1}P_{<N}f|\dots|\partial^{r_k}P_{<N}f|.
    \]
    We can square this bound and estimate the squared sum on the right-hand side by the sum of the squares, losing a $p$-dependent factor. Because the number of such tuples depends only on $p$, we show the estimate for fixed $r_1, \dots, r_k$. By performing a Littlewood-Paley decomposition in each factor and giving up another combinatorical factor by ordering the frequencies, we see
    \begin{align*}
        &\|\partial^k(|P_{<N}f|^{p-1}P_{<N}f)\|_{L^2}^2 \\
        &\quad \lesssim_p \|f\|_{L^\infty}^{2p-2k}\sum_{N_1 \leq \dots \leq N_k < N} \|\partial^{r_1}P_{N_1} f\|_{L^\infty}^2\dots \|\partial^{r_{k-1}}P_{N_{k-1}}f\|_{L^\infty}^2\|\partial^{r_k}P_{N_k}f\|_{L^2}^2\\
        &\quad \lesssim \|f\|_{L^\infty}^{2p-2k}\sum_{N_1 \leq \dots \leq N_k < N} N_1^{2r_1}\dots N_k^{2r_k}\|P_{N_1} f\|^2_{L^\infty}\dots \|P_{N_{k-1}}f\|^2_{L^\infty}\|P_{N_k}f\|^2_{L^2}\\
        &\quad \lesssim \|f\|_{L^\infty}^{2p-2} \sum_{N' < N} (N')^{2k} \|P_{N'} f\|^2_{L^2},
    \end{align*}
    where from the third to the last line we estimated $\|P_{N_i}f\|_{L^\infty} \lesssim \|f\|_{L^\infty}$ to then do the summation first in $N_1$, then in $N_2$ and lastly in $N_{k-1}$, and rename $N_k = N'$. We conclude that
    \begin{align*}
        &\sum_{N}N^{2s}\|P_N(|P_{<N}f|^{p-1}P_{<N}f)\|^2_{L^2} \\
        &\qquad \lesssim \|f\|_{L^\infty}^{2(p-1)}\sum_N \sum_{N' < N} N^{2s-2k} (N')^{2k} \|P_{N'} f\|_{L^2}^2\\
        &\qquad \lesssim \sum_{N'} (N')^{2s} \|P_{N'}f\|_{L^2}^2
    \end{align*}
    by summing over $N$ first.
\end{proof}

\begin{theorem}[Global wellposedness for $w \in H^{s}(\T)$]\label{thm:gwpT}
    Let $p \geq 2$ and $w_0 \in H^s(\T)$ for $1 \leq s \leq [p]$. Then the global solution of Theorem \ref{thm:lwpw} is an $H^s(\T)$-solution and satisfies
    \begin{equation}\label{eq:exponentialbound}
         \|w(t)\|_{H^s} \leq e^{c(\|w_0\|_{H^1(\T)})t}\|w_0\|_{H^s(\T)},
    \end{equation}
    for some constant $c(\|w_0\|_{H^1(\T)})$. If $p$ is an odd integer, the same holds for $1 \leq s < \infty$.
\end{theorem}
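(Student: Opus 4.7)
The plan is to upgrade the $H^1$ local theory of Theorem \ref{thm:lwpw} to $H^s$ by the same contraction argument, and then use the $H^1$ global bound \eqref{eq:energybound} together with Gronwall to prevent $H^s$-blowup. The crucial observation is that Lemma \ref{lemma:fractionalestimate} combined with the Sobolev embedding $H^1(\T) \hookrightarrow L^\infty(\T)$ yields the estimate
\[
\||w|^{p-1}w\|_{H^s(\T)} \lesssim \|w\|_{L^\infty(\T)}^{p-1}\|w\|_{H^s(\T)} \lesssim \|w\|_{H^1(\T)}^{p-1}\|w\|_{H^s(\T)},
\]
which is \emph{linear} in the $H^s$-norm, with coefficient controlled by the $H^1$-norm only. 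This is exactly the structure needed for a Gronwall argument driven by the energy bound \eqref{eq:energybound}.

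First I would redo the contraction argument of Theorem \ref{thm:lwpw} on
$X^s_{R,T} = \{v \in C([0,T], H^s(\T)) : \|v\|_{C([0,T],H^s)}\leq R\}$,
using Lemma \ref{lemma:fractionalestimate} in place of the product rule to estimate $|w|^{p-1}w$, and an analogous difference estimate (obtainable by the fundamental-theorem-of-calculus trick $|x|^{p-1}x - |y|^{p-1}y = \int_0^1 \frac{d}{d\tau}(|y + \tau(x-y)|^{p-1}(y+\tau(x-y)))\,d\tau$ combined with Lemma \ref{lemma:fractionalestimate}) to handle the contraction. This yields local wellposedness in $H^s(\T)$ on a time interval $T^* \gtrsim \|w_0\|_{H^s}^{1-p}$. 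Uniqueness in $C([0,T],H^1)$ forces the $H^s$-solution to coincide with the global $H^1$-solution from Theorem \ref{thm:lwpw} on their common time of existence. If $p$ is an odd integer, the restriction $s\leq [p]$ is removed simply because Lemma \ref{lemma:fractionalestimate} holds in that generality.

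Next, on any interval $[0,T]$ on which the solution is known to lie in $C([0,T],H^s)$, the Duhamel formula \eqref{eq:integralformulationw}, unitarity of $S(t)$ on $H^s$, and the nonlinear estimate above give
\[
\|w(t)\|_{H^s} \leq \|w_0\|_{H^s} + C\int_0^t \|w(\tau)\|_{H^1}^{p-1}\|w(\tau)\|_{H^s}\,d\tau \leq \|w_0\|_{H^s} + C(\|w_0\|_{H^1})\int_0^t \|w(\tau)\|_{H^s}\,d\tau,
\]
where the $H^1$-factor is frozen using \eqref{eq:energybound}. Gronwall then gives the exponential bound \eqref{eq:exponentialbound} on $[0,T]$. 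Globality follows by the standard blow-up alternative: if $T^\# = \sup\{T : w\in C([0,T],H^s)\}$ were finite, then by the local $H^s$-theory $\|w(t)\|_{H^s}$ must diverge as $t\uparrow T^\#$; but the Gronwall estimate bounds it by $e^{c(\|w_0\|_{H^1})T^\#}\|w_0\|_{H^s}$, a contradiction. The only delicate point is this circularity between the Gronwall estimate and the a priori $H^s$-regularity needed to apply it, which is exactly what the continuity/maximal-interval argument resolves; all other steps are routine consequences of Lemma \ref{lemma:fractionalestimate} and \eqref{eq:energybound}.
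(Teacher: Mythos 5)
Your proof is correct and follows essentially the same route as the paper: Duhamel plus Lemma \ref{lemma:fractionalestimate} and the Sobolev/energy bound \eqref{eq:energybound} yield a Gronwall inequality that is linear in the $H^s$ norm with an $H^1$-controlled coefficient, giving \eqref{eq:exponentialbound}. You are in fact more explicit than the paper about the persistence-of-regularity step (the local $H^s$ contraction, coincidence with the global $H^1$ solution by uniqueness, and the blow-up alternative resolving the apparent circularity), which the paper's proof leaves implicit.
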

\begin{proof}
    Let $w$ satisfy the integral equation \eqref{eq:integralformulationw} on some time interval $[0,T]$. We take the $H^s(\T)$ norm on both sides and estimate with the help of Lemma \ref{lemma:fractionalestimate}
    \begin{align*}
        \|w(t)\|_{H^s} &\leq \|w_0\|_{H^s(\T)} + \int_0^t \||w|^{p-1}w\|_{H^s(\T)} \, dt'\\
        &\leq \|w_0\|_{H^s(\T)} + c\|w\|_{L^\infty([0,T] \times \T)}^{p-1}\int_0^t \|w\|_{H^s(\T)}\, dt'.
    \end{align*}
    \eqref{eq:exponentialbound} now follows from Gronwall's lemma, where we get by Sobolev's inequality and the bound from energy conservation \eqref{eq:energybound},
    \begin{equation*}
         \|w(t)\|_{L^{^\infty}}^{p-1} \lesssim \|w(t)\|_{H^1}^{p-1} \lesssim \|w_0\|_{H^1(\T))}^{p-1} + \|w_0\|_{H^1(\T))}^{\frac{p^2-1}{2}} \sim c(\|w_0\|_{H^1(\T)})
    \end{equation*}
    for the constant in the exponential.
\end{proof}

We end this section by noting that the bound on the $H^s(\T)$ norm in Theorem \ref{thm:gwpT} is not optimal, but sufficient in our case to prevent blow-up. For example in the integrable case $p = 3$, there are infinitely many Hamiltonians which give immediate control over the $H^N$ norms for integer $N$.

\section{Local solutions on the line}\label{sec2}
To show wellposedness of NLS-type equations on the line in $L^2(\R)$, one usually makes use of Strichartz estimates. Indeed, the following result (both for the focusing and defocusing NLS) holds \cite[Theorem 2]{qNLS}:

\begin{theorem}[\cite{qNLS}]
    If $1 \leq p < 5$, the Cauchy problem \eqref{eq:NLSv} is locally wellposed in $C([0,T],L^2(\R))\cap L^{\frac{4(p+1)}{p-1}}([0,T],L^{p+1}(\R))$ for any $v_0 \in L^2(\R), w_0 \in H^1(\T)$. 
    
    In the case $1 \leq p < 5$, the guaranteed time of existence $T$ depends only on $\|v_0\|_{L^2(\R)}$ and $\|w_0\|_{H^1(\T)}$, whereas for $p = 5$ it depends on the profile of $v_0$ and $\|w_0\|_{H^1(\T)}$.
\end{theorem}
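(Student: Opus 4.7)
The plan is a Banach fixed point argument in a standard Strichartz space on the line. I would use the Schrödinger-admissible pair $(q,r) = (\tfrac{4(p+1)}{p-1}, p+1)$ on $\R$, which satisfies $\tfrac{2}{q} + \tfrac{1}{r} = \tfrac{1}{2}$, and work in the complete metric space
\begin{equation*}
    X_{R,T} = \{ v \in C([0,T], L^2(\R)) \cap L^q([0,T], L^r(\R)) : \|v\|_{L^\infty_T L^2_x} + \|v\|_{L^q_T L^r_x} \le R \},
\end{equation*}
with $R, T$ to be chosen. Casting \eqref{eq:NLSv} in Duhamel form as
\begin{equation*}
    \Phi(v)(t) = S(t)v_0 - i\int_0^t S(t-s)\, N(v,w)(s)\, ds, \qquad N(v,w) = |v+w|^{p-1}(v+w) - |w|^{p-1}w,
\end{equation*}
the homogeneous Strichartz estimate bounds the linear term in $X_{R,T}$-norm by $C\|v_0\|_{L^2}$, so I would set $R \sim \|v_0\|_{L^2}$.

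For the Duhamel term I would use the pointwise bound $|N(v,w)| \lesssim |v|^p + |v||w|^{p-1}$ and handle the two pieces with two different dual Strichartz pairs — this is needed because $w$ is periodic and therefore has no spatial decay. The pure-power piece $|v|^p$ is controlled by the dual pair $(q',r')$: since $pr' = r$ and $pq' < q$ precisely when $p < 5$ (solving $p(p-1) < 3p+5$), Hölder in time gives $\||v|^p\|_{L^{q'}_T L^{r'}_x} \lesssim T^\theta \|v\|_{L^q_T L^r_x}^p$ for some $\theta > 0$. The mixed term $|v||w|^{p-1}$ would be controlled instead via the trivial dual pair $(1,2)$, i.e.\ the inhomogeneous estimate $\|\int_0^t S(t-s) F\, ds\|_{L^\infty_T L^2_x \cap L^q_T L^r_x} \lesssim \|F\|_{L^1_T L^2_x}$; combined with the global $L^\infty$ bound on $w$ coming from Theorem \ref{thm:gwpT} and $H^1(\T) \hookrightarrow L^\infty(\T)$, this yields
\begin{equation*}
    \bigl\| |v|\,|w|^{p-1} \bigr\|_{L^1_T L^2_x} \le T\, \|w\|_{L^\infty_{[0,T]\times \R}}^{p-1}\, \|v\|_{L^\infty_T L^2_x}.
\end{equation*}
Choosing $T$ small in terms of $\|v_0\|_{L^2}$ and $\|w_0\|_{H^1(\T)}$ then makes $\Phi$ a self-map of $X_{R,T}$, and the contractive property follows identically from the analogous difference bound $|N(v_1,w) - N(v_2,w)| \lesssim |v_1 - v_2|(|v_1|^{p-1} + |v_2|^{p-1} + |w|^{p-1})$. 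Uniqueness in $X_{R,T}$ and continuous dependence on the data are built into the fixed-point construction.

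The main obstacle is the endpoint $p = 5$, where $pq' = q$ and the Hölder-in-time smallness factor for the pure-power term is lost. There one instead uses that $\|S(t)v_0\|_{L^q_T L^r_x} \to 0$ as $T \to 0^+$ by dominated convergence on Strichartz-admissible functionals (after approximating in $L^2$ by Schwartz data), with a rate that depends on the \emph{profile} of $v_0$ rather than merely on $\|v_0\|_{L^2}$. Picking $T$ so small that this norm lies below a universal threshold and then running a small-data Strichartz iteration yields the solution, which is the reason the existence time in the statement must depend on the profile when $p = 5$.
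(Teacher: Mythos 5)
Your argument is correct and is essentially the standard Strichartz fixed-point proof: the paper states this result without proof, citing \cite{qNLS}, and your scheme --- the admissible pair $\big(\tfrac{4(p+1)}{p-1},p+1\big)$, the dual-pair treatment of the pure-power piece $|v|^p$ with the H\"older-in-time gain for $p<5$, and the $L^1_tL^2_x$ treatment of the non-decaying mixed term $|v|\,|w|^{p-1}$ via the global $H^1(\T)\hookrightarrow L^\infty$ bound on $w$ --- is exactly the intended route. Your resolution of the mass-critical endpoint $p=5$ through the smallness of $\|S(t)v_0\|_{L^6_{t,x}([0,T])}$ as $T\to 0^+$ is likewise the standard reason the existence time there depends on the profile of $v_0$ rather than only on its $L^2$ norm.
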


The restriction $p \leq 5$ comes from the fact that the problem \eqref{eq:NLS} is mass-supercritical when $p > 5$. On the other hand, we want to consider solutions in the energy space $H^1(\R)$, and so we do not run into a supercritical range when making $p$ large. Moreover, the wellposedness issue in $H^1(\R)$ is particularly easy because this space is a Banach algebra.

Our goal is to show wellposedness for the perturbed NLS on the line. Both in \cite{teeth} and \cite{qNLS}, local wellposedness was shown under the assumption of $w_0$ being more regular than $v_0$. This was due to the fact that the bound 
\[
    \|vw\|_{H^s(\R)} \lesssim \|v\|_{H^s(\R)}\|w\|_{H^{s+1}(\T)}
\]
was used (see for example \cite[Lemma 11]{teeth}). A close inspection of the argument shows that a bound against $\|w\|_{H^{s+1/2+}(\T)}$ would also work. On the other hand, the case $s=1$ suggests that by localizing in space and using periodicity, one can also estimate with the same regularity $s$, because for example
\[
\begin{split}
    \|vw'\|^2_{L^2(\R)} &= \sum_{k \in \Z} \int_k^{k+1} |v|^2 |w'|^2 \, dx \leq \|w'\|_{L^2(\T)}^2 \sum_{k \in \Z} \|v\|_{L^\infty([k,k+1))}^2 \\
    &\lesssim \|w\|_{H^1(\T)}\sum_k \|v\|^2_{H^1([k,k+1))} = \|w\|_{H^1(\T)}\|v\|_{H^1(\T)}.
\end{split}
\]
This gives a strictly better bound than putting $w'$ into $L^\infty(\T) = L^\infty(\R)$ in the first place. With this remark at hand, and using the arguments from \cite{teeth} and \cite{qNLS}, we will obtain the local wellposedness results with initial data in $H^1(\R) + H^1(\T)$.

We need estimates on difference terms. This is done in the next lemma.

\begin{lemma}\label{lwpdifferenceestimate}
    Let $p \geq 2$ and 
    \[G(v_1,v_2,w) = |v_1 +w|^{p-1}(v_1+w) - |v_2 + w|^{p-1}(v_2 + w).\]
    There exists a constant $c>0$ such that the following estimates hold true:
    \begin{equation}
        |G(v_1,v_2,w)|\leq c|v_1 - v_2||(v_1,v_2,w)|^{p-1},
    \end{equation}
    \begin{equation}
        \begin{split}
            |G(v_1,v_2,w)_x| \leq &c|v_{1,x} - v_{2,x}||(v_1,v_2,w)|^{p-1} + \\
            &\qquad c|v_1 - v_2||(v_{1,x},v_{2,x},w_x)||(v_1,v_2,w)|^{p-2}.
        \end{split}
    \end{equation}
    The same holds true if we replace the function $|x|^{p-1}x$ by $|x|^p$ in all of the arguments.
\end{lemma}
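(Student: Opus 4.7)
The plan is to reduce both estimates to two applications of the fundamental theorem of calculus applied to $f(z) = |z|^{p-1}z$, viewed as a map $\C \cong \R^2 \to \R^2$. Setting $u_j = v_j + w$, so that $G = f(u_1) - f(u_2)$ and $u_1 - u_2 = v_1 - v_2$, I would first record the pointwise bounds
\[
|Df(z)| \lesssim |z|^{p-1} \qquad \text{and} \qquad |D(Df)(z)| \lesssim |z|^{p-2},
\]
valid for $p \geq 2$ with constants depending on $p$, where $Df$ denotes the real differential. The first is a direct computation (equivalently, it follows from the Wirtinger derivatives $\partial_z f = \tfrac{p+1}{2}|z|^{p-1}$ and $\partial_{\bar z} f = \tfrac{p-1}{2}|z|^{p-3}z^2$); the second holds classically away from the origin.

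For the first estimate, the identity
\[
G = \int_0^1 Df\bigl(u_2 + \theta(u_1 - u_2)\bigr)(v_1 - v_2)\, d\theta
\]
together with $|u_2 + \theta(u_1 - u_2)| \leq |v_1| + |v_2| + |w| \lesssim |(v_1,v_2,w)|$ and the bound on $|Df|$ immediately gives the claim.

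For the second estimate, the chain rule and a zero-adding decomposition yield
\[
G_x = Df(u_1)(v_{1,x} - v_{2,x}) + \bigl(Df(u_1) - Df(u_2)\bigr)u_{2,x}.
\]
The first summand is controlled by $c\,(|v_1|+|w|)^{p-1}|v_{1,x}-v_{2,x}|$, giving the first term on the right-hand side. For the second, I would apply the fundamental theorem of calculus once more, now to $Df$ along the segment from $u_2$ to $u_1$, to obtain
\[
|Df(u_1) - Df(u_2)| \lesssim |v_1 - v_2|\,(|v_1|+|v_2|+|w|)^{p-2},
\]
and then use $|u_{2,x}| \leq |v_{2,x}| + |w_x| \lesssim |(v_{1,x}, v_{2,x}, w_x)|$ to produce the second term on the right-hand side. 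The variant with $|x|^p$ in place of $|x|^{p-1}x$ is proved identically, since $g(z) = |z|^p$ satisfies the same bounds $|Dg| \lesssim |z|^{p-1}$ and $|D(Dg)| \lesssim |z|^{p-2}$ for $p \geq 2$.

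The only mildly delicate point, and the main obstacle, is the second application of FTC when $p \in [2,3)$: there $Df$ is only locally Lipschitz rather than $C^1$ across $z = 0$ (in particular, for $p=2$ one interprets $|z|^{p-2}=1$). However, $Df$ is absolutely continuous along any line segment with a.e.\ derivative bounded by $|z|^{p-2}$, so the integral form of FTC still applies and the estimate is preserved. No other subtleties arise, and the constant can be taken to depend only on $p$.
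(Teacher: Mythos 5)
Your proof is correct and follows essentially the same route as the paper: the fundamental theorem of calculus along the segment $u_2+\theta(u_1-u_2)$ combined with the pointwise bounds $|Df(z)|\lesssim|z|^{p-1}$ and $|D(Df)(z)|\lesssim|z|^{p-2}$ (the paper merely differentiates its FTC representation in $x$ under the integral sign, whereas you add the zero at the level of $G_x$ and apply FTC a second time to $Df$ — the same computation in a different order). Your explicit remark on the case $p\in[2,3)$, where $Df$ is only Lipschitz across the origin, is a welcome clarification of a point the paper passes over silently.
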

\begin{proof}
    Note that $\partial_s |f(s)|^p = p |f(s)|^{p-2}\real(\bar f \partial_s f)$. By the fundamental theorem of calculus we can write with $v(s) = v_2+s(v_1-v_2) + w$
    \begin{equation*}
    \begin{split}
        G(v_1,v_2,w)&= (v_1 - v_2) \int_0^1 |v(s)|^{p-1}\, ds\\
        &\quad + (p-1)\int_0^1 |v(s)|^{p-3}v(s)\real(v(s))\overline{(v_1-v_2)})\, ds
    \end{split}
    \end{equation*}
    Since $|v_2+s(v_1-v_2) + w|^{p-1} \lesssim |v_1|^{p-1} + |v_2|^{p-1} + |w|^{p-1}$, this shows the first estimate. For the second estimate, the first summand takes care of when the derivative falls on $(v_1 - v_2)$. Moreover,
    \begin{equation*}
        \begin{split}
            \Big|\partial |v(s)|^{p-1}\Big| \lesssim (|v_{1,x} + |v_{2,x}| + |w_x|)(|v_1|^{p-2} + |v_2|^{p-2} + |w|^{p-2})
        \end{split}
    \end{equation*}
    which produces the second summand in the estimate coming from the derivative falling on the integrand. The case of $|x|^{p}$ instead of $|x|^{p-1}x$ is proven analogously. 
\end{proof}

\begin{theorem}\label{thm:nlsvlwp}
    Let $p \geq 2$. The Cauchy problem \eqref{eq:NLSv} is locally wellposed in $C([0,T],H^1(\R))$ for any $v_0 \in H^1(\R), w_0 \in H^s(\T), s \geq 1$. 
    
    The guaranteed time of existence $T^*$ depends only on the $H^1$ norms $\|v_0\|_{H^1(\R)}$ and $\|w_0\|_{H^{1}(\T)}$. More precisely,
    \[
        T^* \gtrsim \min\big(\|v_0\|_{H^1(\R)}^{1-p},\|w_0\|_{H^1(\T)}^{1-p},\|w_0\|_{H^1(\T)}^{-\frac{p^2-1}2}\big).
    \]
\end{theorem}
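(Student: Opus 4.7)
The plan is a Banach fixed-point argument on the Duhamel formulation of \eqref{eq:NLSv},
\[
\Phi(v)(t) = S(t)v_0 - i\int_0^t S(t-s)\bigl[\,|v+w|^{p-1}(v+w) - |w|^{p-1}w\,\bigr](s)\, ds,
\]
carried out on the ball $X_{R,T} = \{v \in C([0,T], H^1(\R)) : \|v\|_{C([0,T],H^1(\R))} \leq R\}$ with $R = 2\|v_0\|_{H^1(\R)}$. Here $w \in C^0([0,\infty), H^1(\T))$ is the global periodic solution from Theorem \ref{thm:gwpT}, for which \eqref{eq:energybound} provides the uniform-in-time bound $M := \sup_t \|w(t)\|_{H^1(\T)} \lesssim \|w_0\|_{H^1(\T)} + \|w_0\|_{H^1(\T)}^{(p+1)/2}$. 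Since $S(t)$ is an isometry on $H^1(\R)$, the whole question reduces to bounding, uniformly in $t \in [0,T]$, the nonlinearity $G(v,0,w)$ (for self-mapping) and the difference $G(v_1,v_2,w)$ (for contraction) of Lemma \ref{lwpdifferenceestimate} in $H^1(\R)$.

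For the $L^2(\R)$ piece, Lemma \ref{lwpdifferenceestimate} gives $|G(v,0,w)| \lesssim |v|(|v|+|w|)^{p-1}$, so placing $v$ in $L^2(\R)$ and the rest in $L^\infty$ via Sobolev's embeddings $H^1(\R) \hookrightarrow L^\infty(\R)$ and $H^1(\T) \hookrightarrow L^\infty(\T) = L^\infty(\R)$ does the job. For the derivative, the lemma produces
\[
|G(v,0,w)_x| \lesssim |v_x|(|v|+|w|)^{p-1} + |v|\,(|v_x|+|w_x|)(|v|+|w|)^{p-2}.
\]
Every resulting term except the one containing $|v|\,|w_x|$ is again handled by an $L^2 \cdot L^\infty$ split. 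The outlier $\|v\,w_x(|v|+|w|)^{p-2}\|_{L^2(\R)}$ cannot afford to place $w_x$ in $L^\infty$ since $w$ only lives in $H^1(\T)$; here I invoke exactly the localization trick advertised before the lemma statement, namely
\[
\|v w_x\|_{L^2(\R)}^2 = \sum_{k \in \Z}\int_k^{k+1}|v|^2|w_x|^2\, dx \leq \|w_x\|_{L^2(\T)}^2 \sum_{k}\|v\|_{L^\infty([k,k+1))}^2 \lesssim \|w\|_{H^1(\T)}^2 \|v\|_{H^1(\R)}^2,
\]
which together with $\|(|v|+|w|)^{p-2}\|_{L^\infty}$ takes care of this term. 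Collecting everything yields $\|G(v,0,w)\|_{H^1(\R)} \lesssim (\|v\|_{H^1(\R)} + \|w\|_{H^1(\T)})^{p-1}\|v\|_{H^1(\R)}$, and the same argument applied to the stronger pointwise bound of Lemma \ref{lwpdifferenceestimate} produces the analogous
\[
\|G(v_1,v_2,w)\|_{H^1(\R)} \lesssim \bigl(\|v_1\|_{H^1(\R)} + \|v_2\|_{H^1(\R)} + \|w\|_{H^1(\T)}\bigr)^{p-1}\|v_1 - v_2\|_{H^1(\R)}.
\]

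Plugging these into the Duhamel integral, the self-mapping and contraction conditions on $X_{R,T}$ both boil down to $cT(R+M)^{p-1} \leq \tfrac{1}{2}$. Solving for $T^*$ and using the elementary inequality $(a+b+c)^{1-p} \gtrsim \min(a^{1-p},b^{1-p},c^{1-p})$ with $a = \|v_0\|_{H^1(\R)}$, $b = \|w_0\|_{H^1(\T)}$, $c = \|w_0\|_{H^1(\T)}^{(p+1)/2}$ recovers precisely the claimed threefold minimum lower bound; uniqueness and continuity of the data-to-solution map follow from the standard fixed-point structure. The main technical step is the derivative estimate, and the one non-trivial input is the periodicity argument that converts the illegal $L^\infty$-placement of $w_x$ into a product estimate against a factor of $v$; once this is in hand everything else is just bookkeeping on top of the pointwise bounds of Lemma \ref{lwpdifferenceestimate}.
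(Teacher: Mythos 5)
Your proposal is correct and follows essentially the same route as the paper: a Banach fixed-point argument on the Duhamel formulation over the ball $X_{R,T}$ with $R = 2\|v_0\|_{H^1(\R)}$, using the pointwise bounds of Lemma \ref{lwpdifferenceestimate} together with the periodic localization $\sum_k \int_k^{k+1}|v|^2|w_x|^2\,dx \leq \|w_x\|_{L^2(\T)}^2\sum_k\|v\|_{L^\infty([k,k+1))}^2$ for the one term where the derivative lands on $w$. The bookkeeping that converts the energy bound \eqref{eq:energybound} into the threefold minimum for $T^*$ is likewise the same as in the paper.
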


\begin{proof}
    The proof is a standard Banach fixed point argument. If we let $S(t) = e^{it\partial_x^2}$, then the integral formulation of \eqref{eq:NLSv} is
    \begin{equation}\label{eq:integralformulation}
        v(t) = S(t)v_0 + i\int_0^t S(t-t')(|v+w|^{p-1}(v+w)-|w|^{p-1}w)\, dt'.
    \end{equation}
    We will show that the right-hand side is a contraction on
    \[X_{R,T} = \{v \in C([0,T],H^1(\R)): \|v\|_{C([0,T],H^1(\R))} \leq R\}\]
    where $R,T$ will be chosen later. We claim that
    \begin{equation}\label{eq:LWPestimate}
            \Vert |v+w|^{p-1}(v+w) - |w|^{p-1}w \Vert_{H^1(\R)} \lesssim \|v\|_{H^1(\R)}(\|w\|_{H^{1}(\T)}^{p-1} + \|v\|_{H^1(\R)}^{p-1})
    \end{equation}
    Indeed by Lemma \ref{lwpdifferenceestimate} with $v_2 = 0$,
    \begin{equation*}
            \Big| |v+w|^{p-1}(v+w) - |w|^{p-1}w\Big| \lesssim |v|\Big(|v|^{p-1} + |w|^{p-1}\Big),
    \end{equation*}
    and
    \begin{equation*}
        \begin{split}
            &\Big|\partial\big(|v+w|^{p-1}(v+w) - |w|^{p-1}w\big)\Big| \\
            &\qquad \lesssim |v_x|\Big(|v|^{p-1} + |w|^{p-1}\Big) + |v||w_x|\Big(|v|^{p-2} + |w|^{p-2}\Big).
        \end{split}
    \end{equation*}
    The $L^2(\R)$ norm of the first term and the first summand of the second term are seen to be bounded by the right-hand side of \eqref{eq:LWPestimate}  simply by putting $w \in L^\infty(\R)$ and Hölder. For the second summand, we localize in space and use Sobolev's inequality on the interval $[k,k+1]$ to find
    \[
    \begin{split}
        &\int_\R |v|^2 |w_x|^2 \big(|v|^{p-2} + |w|^{p-2}\big)^2 \, dx \\
        &\qquad \leq \|w_x \|^2_{L^2(\T)}\big(\|v\|_{L^\infty(\R)}^{p-2} + \|w\|_{L^\infty(\R)}^{p-2}\big)^2 \sum_k \|v\|^2_{L^\infty([k,k+1))}\\
        &\qquad \lesssim \|w\|^2_{H^1(\T)}\|v\|_{H^1(\R)}^2\big(\|v\|_{H^1(\R)}^{p-2} + \|w\|_{H^1(\T)}^{p-2}\big)^2.
    \end{split}
    \]
    This proves \eqref{eq:LWPestimate}
    
    For the $H^{1}(\T)$ norm of $w$ we have the energy bound \eqref{eq:energybound}, and together with the fact that $S(t)$ is an isometry on $H^1(\R)$ we can bound the $C([0,T],H^1(\R))$-norm of the right-hand side of \eqref{eq:integralformulation} by
    \[  \|v_0\|_{H^1(\R)} + cTR\big(\|w_0\|^{p+1}_{H^1(\T)}+\|w_0\|_{H^1(\T)}^{\frac{p^2-1}2} + R^{p-1}\big)
    \]
    if $v \in X_{R,T}$. Choosing $R = 2\|v_0\|_{H^1(\R)}$ deals with the first summand, and for the second summand, we let 
    \[
    T \lesssim \min\big(\|w_0\|_{H^1(\T)}^{1-p},\|w_0\|_{H^1(\T)}^{-\frac{p^2-1}2}, R^{1-p}\big)
    \]
    be small enough. This guarantees that the right-hand side of \eqref{eq:integralformulation} defines a mapping on $X_{R,T}$. The contractive property is proven similarly: If we keep the notation from Lemma \ref{lwpdifferenceestimate} and use it, then
    \[
        |G(v_1,v_2,w)| \lesssim |v_1-v_2|(|v_1|^{p-1} + |v_2|^{p-1} + |w|^{p-1}),
    \]
    and
    \[
    \begin{split}
        |G(v_1,v_2,w)_x| &\lesssim |v_{1,x}-v_{2,x}|(|v_1|^{p-1}+|v_2|^{p-1}+|w|^{p-1}) \\
        &\quad + |v_{1}-v_{2}|(|v_{1,x}|+|v_{2,x}|+|w_x|)(|v_1|^{p-2}+|v_2|^{p-2}+|w|^{p-2})
    \end{split}
    \]
    When there is no derivative term falling on $w$, the $L^2(\R)$ norm of $G$ respectively $G_x$ can be estimated by putting $w$ in $L^\infty$. The worst term is
    \[
    \begin{split}
        &\int |v_1-v_2|^2 |w_x|^2(|v_1|^{p-2}+|v_2|^{p-2}+|w|^{p-2})^2\,dx \\
        &\qquad \lesssim (\|v_1\|_{L^\infty}^{p-2}+\|v_2\|_{L^\infty}^{p-2}+\|w\|_{L^\infty}^{p-2})^2\sum_k \|v_1-v_2\|^2_{L^\infty([k,k+1))}\|w_x\|^2_{L^2(\T)},
    \end{split}
    \]
    and we can estimate as before by Sobolev's embedding. Hence we find
    \begin{equation*}
    \begin{split} 
        &\Big\|\int_0^t S(t-t') G(v_1,v_2,w)\, dt'\Big\|_{H^1(\R)} \\
        &\qquad \lesssim \|v_1-v_2\|_{L^\infty([0,T], H^1(\R))}T(R^{p-1}+\|w\|^{p-1}_{L^\infty([0,T], H^1(\T))}) \\
        &\qquad \lesssim \|v_1-v_2\|_{L^\infty([0,T], H^1(\R))}T(R^{p-1}+\|w_0\|^{p-1}_{H^1(\T)}+\|w_0\|^{\frac{p^2-1}{2}}_{H^1(\T)})
    \end{split} 
    \end{equation*}
    In particular with the same relative smallness condition as before, we obtain a contraction on $X_{R,T}$.
\end{proof}

As a byproduct of Theorem \ref{thm:nlsvlwp} we obtain a blow-up alternative for the solution $v$ to \eqref{eq:NLSv}: Denote by $T^*$ the maximal time of existence. Then either $T^* < \infty$ and
\begin{equation}\label{eq:blowupalternative}
    \limsup_{t \to T^*} \|v(t)\|_{H^1(\R)} = \infty,
\end{equation}
or $T^* = \infty$.

Indeed, we see that if we had a maximal solution of \eqref{eq:NLSv} with the property $\limsup_{t \to T^*} \|v(t)\|_{H^1(\R)} < \infty$, we could continue it to some time $T^* + \delta$ by Theorem \ref{thm:nlsvlwp}, yielding a contradiction to its definition as a maximal solution.

\section{Global solutions on the line}\label{sec3}

In this section, we will prove our main theorem. To this end, we define momentum $M$, energy $E$ and Hamiltonian $H$ as
\begin{equation*}
    M(v) = \int \frac{1}{2} |v|^2 \, dx, \qquad E(v) = \int \frac{1}{2} |v_x|^2 + \frac{1}{p+1}|v|^{p+1}.
\end{equation*}
and
\begin{equation*}
    H(v) = \int \frac{1}{2} |v_x|^2 + \frac{1}{p+1}\big(|v+w|^{p+1} - |w|^{p+1} - (p+1)|w|^{p-1}\real(v\bar w) \big) \, dx.
\end{equation*}
Moreover, we introduce the notation $(f,g) = \real \int_\R f(x) \bar g(x) \, dx$.

\begin{theorem}\label{thm:mainformal}
    Let $v \in C^0([0,T),H^1(\R))$ be a solution of \eqref{eq:NLSv}. Let $p \geq 3$ and $w_0 \in H^{s}(\T)$ with 
    \begin{itemize}
        \item $3/2 < s < \infty$, if $p = 3$,
        \item $5/2 < s < \infty$, if $p \geq 5$ is an odd integer and 
        \item $5/2 < s \leq [p]$, else.
    \end{itemize}
    Then there is a constant $C = C(T, \|v_0\|_{H^{1}},\|w_0\|_{H^{s}}) > 0$ such that
	\begin{equation}
	    \sup_{t \in [0,T)} M(v(t)) + E(v(t)) \leq C
	\end{equation}
	In particular, \eqref{eq:NLSv} is globally wellposed.
\end{theorem}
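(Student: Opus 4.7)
The plan is to establish a Gronwall-type differential inequality for $\Phi(t) := M(v(t)) + H(v(t))$, close it on an arbitrary $[0,T]$, and upgrade the resulting $H^1$-bound to global existence via the blow-up alternative \eqref{eq:blowupalternative}. The $w$-component will provide the coefficients of the inequality: by Theorem \ref{thm:gwpT}, $\|w(t)\|_{H^s(\T)} \leq e^{C(\|w_0\|_{H^1(\T)})\,t}\|w_0\|_{H^s(\T)}$, so any polynomial expression in $\|w(t)\|_{H^s(\T)}$ is integrable on $[0,T]$. By the Sobolev embedding $H^{1/2+}(\T) \hookrightarrow L^\infty(\T)$, the threshold $s > 3/2$ (resp.\ $s > 5/2$) provides an a priori bound on $\|w_x\|_{L^\infty}$ (resp.\ on $\|w_{xx}\|_{L^\infty}$).

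For the mass, differentiating under the integral and using $iv_t = -v_{xx} + G$ with $G = |v+w|^{p-1}(v+w) - |w|^{p-1}w$ gives $\tfrac{d}{dt}M(v(t)) = \im\int \bar v\,G\,dx$, which by Lemma \ref{lwpdifferenceestimate} applied with $v_2 = 0$ and Hölder's inequality is bounded polynomially in $\|v\|_{H^1(\R)}$ and $\|w\|_{L^\infty(\T)}$. The central calculation is that of $\tfrac{d}{dt}H(v(t))$: since $v$ satisfies the Hamiltonian equation associated to $H$ when $w(t)$ is frozen, the chain rule collapses to the explicit $w$-dependence,
\[
    \tfrac{d}{dt}H(v(t)) = \bigl(\partial_{t,w} H\bigr)(v(t), w(t)).
\]
Substituting $w_t = iw_{xx} - i|w|^{p-1}w$ from \eqref{eq:NLSw} and integrating by parts to redistribute the second derivative, one should obtain a sum of terms each admitting a factorization with the highest-order factor in $L^2$ and all remaining factors in $L^\infty$, localized over unit intervals in the style of the bound for $\|vw'\|_{L^2(\R)}$ preceding Lemma \ref{lwpdifferenceestimate}. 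A term-by-term estimate then gives
\[
    \bigl|\tfrac{d}{dt}H(v(t))\bigr| \lesssim P\bigl(\|w(t)\|_{H^s(\T)}\bigr)\,\bigl(1 + \|v(t)\|_{H^1(\R)}\bigr)^q
\]
for some polynomial $P$ and integer $q = q(p)$. For $p = 3$ every $w_{xx}$ can be removed by a single integration by parts, so that $w_x \in L^\infty$ suffices, explaining the threshold $3/2+$; for higher $p$, the non-polynomial nature of $|w|^{p-1}w$ leaves an unavoidable $w_{xx}$ in $L^\infty$, giving $5/2+$.

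Coercivity of $H$ follows from pointwise convexity of $z \mapsto \tfrac{1}{p+1}|z|^{p+1}$, which yields $|v+w|^{p+1} - |w|^{p+1} - (p+1)|w|^{p-1}\real(v\bar w) \geq 0$, hence $\tfrac12\|v_x\|_{L^2}^2 \leq H(v)$ and $\|v\|_{H^1(\R)}^2 \lesssim \Phi(t)$. Combining with the mass estimate and the $w$-bound from Theorem \ref{thm:gwpT} yields an inequality $\Phi'(t) \leq \psi(t)(1 + \Phi(t))^{q/2}$ with $\psi$ integrable on $[0,T]$, which closes by Gronwall and gives $\Phi(t) \leq C(T, \|v_0\|_{H^1(\R)}, \|w_0\|_{H^s(\T)})$ on $[0,T)$. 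A Taylor expansion conversely yields $E(v) \lesssim H(v) + \|w\|_{L^\infty}^{p-1} M(v)$, providing the claimed bound on $M(v) + E(v)$, and the blow-up alternative \eqref{eq:blowupalternative} then gives global existence.

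The main obstacle, as emphasized in the introduction, is to justify the formal differentiation of $H(v(t))$ for merely $H^1$-regular solutions; I would handle this by smoothing $v_0$ and $w_0$, performing the Gronwall argument at $H^2$- or $H^3$-regularity where $v_t$ is a genuine $L^2$ function, and passing to the limit by continuous dependence from Theorem \ref{thm:nlsvlwp}. Secondary technical points are the careful structure of the integrations by parts, which must avoid ever placing more than $s$ derivatives on $w$, and the polynomial degrees that appear in the bound, which must be kept compatible with Gronwall's lemma over an arbitrary finite interval.
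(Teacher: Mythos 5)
Your overall architecture (Gronwall for $M+H$, transfer to $M+E$ via a Taylor expansion, conclusion by the blow-up alternative) matches the paper, and the convexity observation $H(v)\geq \tfrac12\|v_x\|_{L^2}^2$ is a clean way to get one half of the coercivity; your regularization-at-higher-regularity route for justifying $\tfrac{d}{dt}\int|v_x|^2\,dx$ is a workable alternative to the paper's twisting trick $\psi(t)=e^{-it\partial_x^2}v(t)$, at the cost of proving persistence of regularity for \eqref{eq:NLSv}, which is extra unproven work. But there is a genuine gap at the heart of the argument: you close with an inequality $\Phi'(t)\leq\psi(t)(1+\Phi(t))^{q/2}$ for an unspecified integer $q=q(p)$ and assert that Gronwall finishes. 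Gronwall yields a bound on all of $[0,T]$ only when the right-hand side is linear in $\Phi$; for $q>2$ the comparison ODE blows up in finite time and you obtain nothing beyond a short interval. The entire content of the proof is precisely to verify that every term of $\partial_t H$ is bounded by $C(w)(M+E)$ \emph{linearly}, which the paper does by checking that each term carries between $2$ and $p+1$ powers of $v$ and at most one derivative on $v$, so that $\|v\|_{L^q}^q\leq M^{\frac{p-q+1}{p-1}}E^{\frac{q-2}{p-1}}\leq M+E$ for $2\leq q\leq p+1$ applies. You never perform this count, and a naive \enquote{highest factor in $L^2$, rest in $L^\infty$} estimate gives $\|v\|_{H^1}^{p}$, i.e.\ $q=p>2$.

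Moreover, the mechanism you propose --- always substituting $w_t=iw_{xx}-i|w|^{p-1}w$ and integrating by parts to remove $w_{xx}$ --- is exactly what fails for $p>3$: once the derivative falls on $v$ you face $\int|w_x|\,|v_x|\,|v|^{p-1}\,dx$, and controlling $\|v_x\|_{L^2}\|v\|_{L^{2(p-1)}}^{p-1}$ linearly by $M+E$ requires $2(p-1)\leq p+1$, i.e.\ $p\leq 3$. This, and not the non-polynomial nature of $|w|^{p-1}w$ (odd integers $p\geq5$ still require $s>5/2$ in the statement), is why the theorem demands $w_0\in H^{5/2+}(\T)$ for $p>3$: there one must \emph{not} integrate by parts, but instead use $w_t\in L^\infty$ directly and invoke the Taylor-expansion bounds of Lemma \ref{lemma:estimates} to get $|\partial_t H|\lesssim\|w_t\|_{L^\infty}\int|v|^2(|v|^{p-2}+|w|^{p-2})\,dx+\dots\lesssim M+E$. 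As sketched, your plan would either produce a superlinear inequality that does not close or, for $p>3$, a term it cannot estimate at all. A smaller imprecision: the reverse comparison $E\lesssim H+cM$ is not a consequence of $\|w\|_{L^\infty}^{p-1}M$ alone; the $\int|v|^p|w|$ term in the Taylor remainder needs the same interpolation plus absorption of an $\varepsilon E$ into the left-hand side.
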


The idea of the proof is that by \eqref{eq:flow}, we know that there are no derivatives falling on $v$, and we essentially just have to count factors of $v$ in the time derivative of $H$. Factors that depend on the $L^\infty$ norm of $w$ and its derivatives are allowed since by Theorem \ref{thm:gwpT} it is bounded locally in time. 

We will see that there are always more than two, but never too many factors of $v$. If there are too many factors of $v$, we are not able to estimate by $M^\alpha E^{1-\alpha}$ anymore, leading to a break down in the Gronwall argument. This is also the reason of the higher regularity assumption $w_0 \in H^{5/2+}(\T)$ in the case $p > 3$, because the estimate
\[
    \int |v_x||v|^{p-1}\, dx \lesssim \|v_x\|_{L^2}^{2\alpha}\|v\|_{L^{p+1}}^{(1-\alpha)(p+1)}
\]
only works for $p \leq 3$.

Before proving Theorem \ref{thm:mainformal}, we need additional estimates in the spirit of Lemma \ref{lwpdifferenceestimate}.

\begin{lemma}\label{lemma:estimates}
Let $p \geq 3$. There exists a constant $c = c(p) > 0$ such that the following estimates hold true:
    \begin{equation}
        \big| |v+w|^{p-1} - |w|^{p-1}- (p-1)\real(w\bar v)|w|^{p-3}\big| \leq c|v|^2(|w|^{p-3} + |v|^{p-3}),
    \end{equation}
    \begin{equation}
    \begin{split}
        &\big| |v+w|^{p+1} - |w|^{p+1} - (p+1)\real(w\bar v)|w|^{p-1} - |v|^{p+1} \big| \\
        &\qquad \leq c|v|^2|w|(|w|^{p-2} + |v|^{p-2}),
    \end{split}
    \end{equation}
    \begin{equation}
        \begin{split}
            &\big| |v+w|^{p-1}(v+w) - |w|^{p-1}w  - ((p-1) \real(w\bar v)|w|^{p-3}w + v|w|^{p-1}) \big| \\
        &\qquad \leq  c|v|^2(|w|^{p-2} + |v|^{p-2}).
        \end{split}
    \end{equation}
    In particular, there exists a constant $c = c(p, \|w\|_{L^\infty(\T)}) >0$ such that
    \begin{equation}\label{eq:energyequivalence}
        H \leq c M + E, \qquad E \leq cM + H.
    \end{equation}
\end{lemma}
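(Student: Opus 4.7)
The plan is to obtain each of the three pointwise estimates by Taylor's theorem with integral remainder applied along the segment $s\mapsto w+sv$, $s\in[0,1]$, and then to derive \eqref{eq:energyequivalence} by integrating the second estimate and using a crude interpolation between $L^2$ and $L^{p+1}$.

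For the first estimate, set $f(s)=|w+sv|^{p-1}$. Using $\partial_s|a|^{p-1}=(p-1)|a|^{p-3}\real(\bar a\,v)$ for $a=w+sv$ one checks $f(0)=|w|^{p-1}$, $f'(0)=(p-1)|w|^{p-3}\real(\bar w v)$, and after one more differentiation $|f''(s)|\lesssim_p |w+sv|^{p-3}|v|^2$ (the two terms combine after using $|\real(\bar a v)|\le |a||v|$; since $p\geq 3$ no negative power appears). Taylor's formula $f(1)-f(0)-f'(0)=\int_0^1(1-s)f''(s)\,ds$ together with $|w+sv|\le |w|+|v|$ and $(a+b)^{p-3}\lesssim_p a^{p-3}+b^{p-3}$ then yields the first estimate. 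The third estimate is completely analogous, this time with $f(s)=(w+sv)|w+sv|^{p-1}$ and $|f''(s)|\lesssim_p|w+sv|^{p-2}|v|^2$.

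The main obstacle is the second estimate: a naive Taylor expansion of $|w+sv|^{p+1}$ gives only $|v|^2(|w|^{p-1}+|v|^{p-1})$, which is off by a factor $|w|$ in the large-$|v|$ regime, and it is not a priori clear where the extra subtraction of $|v|^{p+1}$ fits in. I would handle this by a case split. In the region $|v|\leq 2|w|$, the Taylor remainder is dominated by $|v|^2|w|^{p-1}=|v|^2|w|\cdot|w|^{p-2}$, and in this regime $|v|^{p+1}\le(2|w|)^{p-1}|v|^2\lesssim_p |v|^2|w|^{p-1}$ as well, so adding it back does not break the bound. In the complementary region $|v|>2|w|$, I would reverse the roles and Taylor-expand from $v$: $|v+w|^{p+1}-|v|^{p+1}=\int_0^1(p+1)|v+sw|^{p-1}\real(\overline{(v+sw)}w)\,ds$, which is bounded by $|w|(|v|+|w|)^p\lesssim |v|^p|w|$. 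The leftover terms $|w|^{p+1}$ and $(p+1)|w|^{p-1}\real(\bar w v)$ are likewise $\lesssim |v|^p|w|$ when $|w|<|v|/2$, and $|v|^p|w|=|v|^2|w|\cdot|v|^{p-2}$ fits the right-hand side. The two cases combine to the claimed inequality.

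For the equivalence \eqref{eq:energyequivalence}, observe that by definition
\[
H(v)-E(v)=\tfrac{1}{p+1}\int_\R\bigl(|v+w|^{p+1}-|w|^{p+1}-(p+1)|w|^{p-1}\real(\bar w v)-|v|^{p+1}\bigr)\,dx,
\]
so the second estimate gives $|H(v)-E(v)|\lesssim_p\|w\|_{L^\infty}^{p-1}M(v)+\|w\|_{L^\infty}\int|v|^p\,dx$. The elementary pointwise bound $|v|^p\le|v|^2+|v|^{p+1}$ (valid for $p\geq 2$) yields $\int|v|^p\,dx\lesssim_p M(v)+E(v)$, hence $|H(v)-E(v)|\leq c(p,\|w\|_{L^\infty})(M(v)+E(v))$. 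A short Young-type absorption argument (refining the interpolation $\int|v|^p\,dx\lesssim M^{1/(p-1)}E^{(p-2)/(p-1)}\le \varepsilon E+C_\varepsilon M$ with $\varepsilon$ small) then separates the $E$ contribution from the $M$ contribution and delivers both inequalities in \eqref{eq:energyequivalence}.
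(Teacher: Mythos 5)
Your proposal is correct, and for the first and third pointwise estimates as well as for the derivation of \eqref{eq:energyequivalence} it coincides with the paper's argument (one-parameter Taylor expansion along $s\mapsto w+sv$ with integral remainder, then the Hölder/Young absorption $\int|v|^p\,dx\lesssim M^{1/(p-1)}E^{(p-2)/(p-1)}\le\varepsilon E+C_\varepsilon M$). Where you genuinely diverge is the second estimate, which is indeed the delicate one: you correctly identify that a single Taylor expansion in $v$ misses the factor $|w|$, and you repair this by a case split $|v|\le 2|w|$ versus $|v|>2|w|$, expanding around $w$ in the first regime and around $v$ in the second. The paper instead introduces the two-parameter function $f(s,t)=|sv+tw|^{p+1}$ and writes the left-hand side as $f(1,1)-f(0,1)-f(1,0)-\partial_s f(0,1)=\int_0^1\!\int_0^1\!\int_0^s\partial_s^2\partial_t f(s',t)\,ds'\,ds\,dt$, so that the factor $|w|$ is produced structurally by the extra $\partial_t$ derivative, with the bound $|\partial_s^2\partial_t f|\lesssim|v|^2|w|(|v|^{p-2}+|w|^{p-2})$; this also treats all three estimates by one uniform mechanism (the third via $g(s,t)=|sv+tw|^{p-1}(sv+tw)$). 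Your route is more elementary — it avoids computing and bounding the third mixed derivative, at the price of a case distinction and two separate expansions — and both yield the same constants up to $p$-dependence. One small point worth making explicit in either approach: for $3\le p<5$ the intermediate expressions formally contain negative powers such as $|w+sv|^{p-5}$, but they always appear multiplied by $\real(\overline{(w+sv)}v)^2\le|w+sv|^2|v|^2$, so the combined exponent is nonnegative and the second-derivative bounds hold pointwise; you implicitly use this when you say the two terms combine, and it is fine.
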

\begin{proof}
    We define $f(s,t) = |sv+tw|^q$. Note first that
    \begin{align*}
        \partial_s f(s,t) &= q|sv+tw|^{q-2}\real((sv+tw)\bar v),\\
        \partial_t f(s,t) &= q|sv+tw|^{q-2}\real((sv+tw)\bar w),\\
        \partial_s^2 f(s,t) &= q|sv+tw|^{q-2}|v|^2 + q(q-2)|sv+tw|^{q-4}\real((sv+tw)\bar v)^2,\\
        \partial_{s}\partial_t f(s,t) &= q|sv+tw|^{q-2}\real(v\bar w) \\
        &\qquad + q(q-2)|sv+tw|^{q-4}\real((sv+tw)\bar v)\real((sv+tw)\bar w),\\
        \partial_s^2 \partial_t f(s,t) &= q(q-2)|sv+tw|^{q-4}\real((sv+tw)\bar v)\real(v\bar w) \\
        &\qquad + q(q-2)|sv+tw|^{q-4}|v|^2\real((sv+tw)\bar w),\\
        &\qquad + q(q-2)|sv+tw|^{q-4}\real((sv+tw)\bar v)\real(v\bar w),\\
        &\qquad + q(q-2)(q-4)|sv+tw|^{q-6}\real((sv+tw)\bar v)^2\real((sv+tw)\bar w).
    \end{align*}
    In particular, we see that for $0 \leq s,t \leq 1$,
    \begin{align*}
        |\partial_s^2 f(s,t)| &\lesssim |v|^2(|v|^{q-2}+|w|^{q-2}),\\
        |\partial_s^2 \partial_t f(s,t)| &\lesssim |v|^2|w|(|v|^{q-3} + |w|^{q-3}).
    \end{align*}
    Now the left-hand side of the first estimate is $f(1,1) - f(0,1) - \partial_s f(0,1)$ with $q = p-1$, and so the first estimate follows from the fundamental theorem of calculus,
    \begin{equation*}
        \begin{split}
            |f(1,1) - f(0,1) - \partial_s &f(0,1)| = \Big|\int_0^1 \partial_s f(s,1) - \partial_s f(0,1) \, ds\Big| \\
            &= \Big|\int_0^1 \int_0^s \partial_s^2 f(s',1)\, ds' ds\Big| \lesssim |v|^2(|v|^{q-2}+|w|^{q-2}).
        \end{split}
    \end{equation*}
    For the second estimate we note that $f(0,0) = \partial_s f(0,0) = 0$ and use the fundamental theorem of calculus three times to see
    \begin{equation*}
        \begin{split}
            |f(1,1) - f(0,1) - f(1,0) - \partial_s f(0,1)| &= \Big| \int_0^1\int_0^1 \int_0^s  \partial_s^2 \partial_t f(s',t)\, ds' ds dt \Big|\\
            &\lesssim |v|^2|w|(|v|^{q-3} + |w|^{q-3}).
        \end{split}
    \end{equation*}
    The third estimate follows similarly by arguing with $g(s,t) = |sv + tw|^{p-1}(sv + tw)$.
    With these estimates, we use Hölder $\|v\|_{L^p}^p \leq \|v\|_{L^2}^{2/(p-1)}\|v\|_{L^{p+1}}^{(p+1)(p-2)/(p-1)}$ and Young to see
    \begin{equation*}
    \begin{split}
         |H(v) - E(v)| &\lesssim \int |v|^2|w|(|w|^{p-2} + |v|^{p-2}) \, dx \\
            &\lesssim \|w\|_{L^\infty(\T)}^{p-1}M(v) + \|w\|_{L^\infty(\T)}\|v\|_{L^p(\R)}^p\\
            &\lesssim \|w\|_{L^\infty(\T)}^{p-1}M(v) + \|w\|_{L^\infty(\T)}M(v)^{\frac{1}{p-1}}E(v)^{\frac{p-2}{p-1}} \\
            &\lesssim \varepsilon E(v) + C(\varepsilon) M(v).
    \end{split}
    \end{equation*}
    Choosing $\varepsilon$ small enough, we arrive at \eqref{eq:energyequivalence}.
\end{proof}

In order to prove Theorem \ref{thm:mainformal}, we want to take time derivatives of $E$ and $M$ and hence of $v$. If $v_0 \in H^1(\R)$, then $v(t) \in H^1(\R)$ and hence from \eqref{eq:NLSv} we see that $v_t \in H^{-1}(\R)$ for all times. This is enough to rigorously calculate $\partial_t M = (v,v_t)$, by interpreting the involved integral as a dual pairing between $H^1$ and $H^{-1}$. For the bilinear part of the energy, $-(v_{xx}, v_t)$, this does not suffice any more. Our solution is to employ a twisting trick (see for example \cite{bosegases} and \cite{dispersionmanaged}) and to work in the interaction picture, that is with the function $\psi(t) = e^{-it\partial_x^2} v(t)$.

\begin{proof}[Proof of Theorem \ref{thm:mainformal}]
    Fix $T > 0$. We use Gronwall's lemma to obtain an exponential bound on $M + H$. By \eqref{eq:energyequivalence}, this is enough to bound $M + E$ and hence the $H^1$ norm. 
    
    First of all, note that for any $2 \leq q \leq p+1$, we have by Hölder and Young
    \[
        \|v\|_{L^q}^q \leq \|v\|_{L^2}^{2\frac{p-q+1}{p-1}}\|v\|_{L^{p+1}}^{(p+1)\frac{q-2}{p-1}} \leq M(v)^{\frac{p-q+1}{p-1}}E(v)^{\frac{q-2}{p-1}} \leq M(v) + E(v).
    \]
    This shows that powers of $v$ ranging from two to $p+1$ are allowed.
    
    We begin with $M$ and see, interpreting the integrals in the first line as a dual pairing between $H^1$ and $H^{-1}$,
    \begin{align*}
        \partial_t M(v) &= (v, v_t) = (iv, -v_{xx} + |v+w|^{p-1} (v+w) - |w|^{p-1}w) \\
        &= (iv,|v+w|^{p-1} (v+w) - |w|^{p-1}w) \\
        &\lesssim E(v) + M(v) \lesssim H(v) + M(v).
    \end{align*}
    Here, the summand $(iv, -v_{xx})$ vanishes by partially integrating once and we used Lemma \ref{lwpdifferenceestimate} in the last line. To calculate the time derivative of $H$, we start with a formal calculation for the bilinear part,
    \begin{equation*}
        \partial_t \frac{1}{2} \int |v_x|^2 \, dx = (v_t, -v_{xx}) = -\big(v_t, |v+w|^{p-1}(v+w) - |w|^{p-1}w\big),
    \end{equation*}
    using \eqref{eq:NLSv} to rewrite $-v_{xx}$ and $(v_t, iv_t) = 0$. As $v_t \in H^{-1}$ and $v_{xx} \in H^{-1}$, their product is not well defined and the middle step in the above calculation needs to be justified. To make it rigorous, we define $\psi(t) = e^{-it\partial_x^2} v(t) = S(-t)v(t)$. Recall that $v(t)$ satisfies \eqref{eq:integralformulation}, and hence $\psi(t)$ satisfies
    \[
        \psi(t) = v_0 + \int_0^t S(-t')\big(|v+w|^{p-1}(v+w) - |w|^{p-1}w\big) \, dt'.
    \]
    In particular, we see that 
    \[i\partial_t \psi = S(-t)(|v+w|^{p-1}(v+w) - |w|^{p-1}w)(t),\]
    which shows $\psi \in C^1((0,T), H^1(\R))$. Since $S(-t)$ is an isometry on $L^2$ and commutes with derivatives, we calculate
    \begin{align*}
        \partial_t \frac{1}{2}&\int |v_x|^2 = -(\psi_{xx}, \psi_t) \\
        &= -\big(i\psi_{xx}, S(-t) (|v+w|^{p-1}(v+w) - |w|^{p-1}w)\big)\\
        &= -\big(iv_{xx}, |v+w|^{p-1}(v+w) - |w|^{p-1}w\big)\\
        &= -\big(v_t + i(|v+w|^{p-1}(v+w) - |w|^{p-1}w), |v+w|^{p-1}(v+w) - |w|^{p-1}w\big)\\
        &= -\big(v_t, |v+w|^{p-1}(v+w) - |w|^{p-1}w\big).
    \end{align*}
    which is well-defined as a dual pairing, because $|v+w|^{p-1}(v+w) - |w|^{p-1}w \in H^1(\R)$ by Lemma \ref{lwpdifferenceestimate}.
    
    For the nonlinear term of the Hamiltonian, we calculate
    \begin{align*}
        \partial_t \frac{1}{p+1}& \int |v+w|^{(p+1)} - |w|^{p+1} - (p+1)\real(w\bar v)|w|^{p-1} \, dx \\
        &= \real\int\big( |v+w|^{p-1}(v_t+w_t)(\bar v + \bar w) - |w|^{p-1}(w_t \bar w) \\
        &\qquad \qquad  -(w_t \bar v |w|^{p-1} + v_t\bar w |w|^{p-1} + (p-1)w \bar v \real (w_t \bar w) |w|^{p-3}\big)\\
        &= \real\int \big(|v+w|^{p-1}(\bar v + \bar w)v_t - |w|^{p-1} \bar w v_t\big) + \int R,
    \end{align*}
    where the remainder $R$ only carries time derivatives on $w$,
    \begin{equation*}
        R = \real\big(|v+w|^{p-1}(\bar v + \bar w) w_t - |w|^{p-1}(\bar w+\bar v) w_t\big) - (p-1)|w|^{p-3} \real(w_t \bar w)\real (w\bar v).
    \end{equation*}
    Since the first summand cancels with the time derivative of the bilinear part, we arrive at $\partial_t H = \int R$ as predicted by \eqref{eq:flow}. We conclude
    \begin{align*}
        \partial_t H &=  \big(w_t, |v+w|^{p-1}(v+w) - |w|^{p-1}(v+w) - (p-1)\real(w \bar v)|w|^{p-3}w\big)\\
        &= \big(w_t, v(|v+w|^{p-1}-|w|^{p-1})\big) \\
        &\qquad \qquad + \big(w_t, w(|v+w|^{p-1} - |w|^{p-1} - (p-1)\real(w\bar v)|w|^{p-3})\big).
    \end{align*}
    We first argue how to handle this term in the case $p > 3$. In this case, we assumed $w_0 \in H^{s}(\T)$, $s > 5/2+$ with corresponding upper bound depending on whether $p$ is an odd integer or not. This means by Theorem \ref{thm:gwpT} that we have local in time boundedness in $L^\infty$ of $w_{xx}$, hence of $w_t$. Now using Lemma \ref{lemma:estimates}, this implies \begin{align*}
        \big|\partial_t H\big| &\lesssim \|w_t\|_{L^\infty}\int |v|^2(|v|^{p-2}+|w|^{p-2})\, dx\\
        &\qquad + \|w_t\|_{L^\infty}\|w\|_{L^\infty}\int |v|^2(|v|^{p-3}+|w|^{p-3})\, dx\\
        &\lesssim E(v) + M(v) \lesssim H(v) + M(v),
    \end{align*}
    with a bound depending on $p, w_0, T$. Gronwall gives
    \begin{equation*}
        H(v(t)) + M(v(t)) \lesssim (H(v_0) + M(v_0))e^{Ct},
    \end{equation*}
    and proves the theorem for this case.
    
    We turn to $p = 3$. By plugging in the equation \eqref{eq:NLSw} for $w_t$, we obtain two terms for each summand in $\partial_t H$, one with $|w|^{2}w$ and one with $w_{xx}$. For the term with $|w|^{2}w$, we estimate $w$ in $L^\infty$ and argue as above. The other two terms are
    \begin{align*}
        &\big(iw_{xx}, v(|v+w|^2-|w|^2)\big) + \big(iw_{xx}, w(|v+w|^2 - |w|^2 - 2\real(w\bar v))\big)\\
        &\qquad = \big(iw_{xx}, |v|^2v + 2v\real(v\bar w) + |v|^2 w\big).
    \end{align*}
    We integrate by parts once. This produces terms where the derivative falls on a copy of $w$, and terms where the derivative falls on a copy of $v$. The former case is handled just as above because $\|w_x\|_{L^\infty}$ is bounded locally in time by $w_0 \in H^{3/2+}$. In the latter case, we put $v_x$ in $L^2$ and estimate
    \begin{align*}
        &\Big|\big(iw_{x}, (|v|^2v)_x + 2v_x\real(v\bar w) + 2v\real(v_x\bar w) + (|v|^2)_x w\big)\Big| \\
        &\qquad \qquad \lesssim \|w_x\|_{L^\infty}\big(\|v_x\|_{L^2}\|v\|_{L^4}^2 + \|w\|_{L^\infty}\|v_x\|_{L^2}\|v\|_{L^2}\big)\\
        &\qquad \qquad \lesssim E(v) + E(v)^{\frac{1}{2}}M(v)^{\frac{1}{2}}
    \end{align*}
    which can be estimated by $E(v) + M(v)$ by Young. As before we conclude via Gronwall and finish the proof.
\end{proof}

\newpage

\end{document}